\title{Singularit\'es canoniques et actions horosph\'eriques}
\author{Kevin Langlois\footnote{{\em Remerciement} : Le pr\'esent texte est financ\'e par l'universit\'e Heinrich Heine de Duesseldorf.  {\em Adresse} : Mathematisches Institut, Heinrich Heine Universit\"at, 40225 D\"usseldorf, Allemagne. {\em Courrier \'electronique} : langlois.kevin18@gmail.com.}}
\date{}
\begin{document}
\maketitle

\theoremstyle{plain}
\newtheorem{theorem}{Th\'eor\`eme}[section]
\newtheorem{lemme}[theorem]{Lemme}
\newtheorem{proposition}[theorem]{Proposition}
\newtheorem{corollaire}[theorem]{Corollaire}
\newtheorem*{theorem*}{Théorème}

\theoremstyle{definition}
\newtheorem{definition}[theorem]{D\'efinition}
\newtheorem{rappel}[theorem]{}
\newtheorem{conjecture}[theorem]{Conjecture}
\newtheorem{exemple}[theorem]{Exemple}
\newtheorem{notation}[theorem]{Notation}

\theoremstyle{remark}
\newtheorem{remarque}[theorem]{Remarque}
\newtheorem{note}[theorem]{Note}

\def\QQ{{\mathbb Q}}
\def\G{{\mathbb G}}
\def\ZZ{{\mathbb Z}}
\def\PP{{\mathbb P}}
\def\CC{{\mathbb C}}
\def\F{{\mathscr F}}
\def\V{{\mathscr V}}
\def\D{{\mathscr D}}
\def\Hom{{\rm Hom}}
\def\E{{\mathcal{E}_{\rm st}}}
\def\L{{\mathscr{L}}}
\def\LL{{\mathbb{L}}}
\def\ord{{\mathrm{ord}}}
\def\supp{{\mathrm{supp}}}
\def\M{{\mathcal{M}}}
\def\deg{{\mathrm{deg}}}
\def\supp{{\mathrm{supp}}}
\def\N{{\mathscr{E}}}
\def\vv{{\underline{v}}}
\def\div{{\mathrm{div}}}
\def\ord{{\mathrm{ord}}}
\selectlanguage{english}
\begin{abstract}
Let $G$ be a connected reductive linear algebraic group. We consider the normal $G$-varieties with horospherical orbits. In this short note, we provide a criterion to determine whether these varieties have at most canonical, log canonical or terminal singularities in the case where they admit an algebraic curve as rational quotient.
This result seems to be new in the special setting of torus actions with general orbits of codimension $1$. For the given $G$-variety $X$, our criterion is expressed in terms of a weight function $\omega_{X}$ that is constructed from the set of $G$-invariant valuations of the function field $k(X)$. In the log terminal case, the generating function of $\omega_{X}$ coincides with the stringy motivic volume of $X$. As an application, we discuss the case of normal $k^{\star}$-surfaces.
\end{abstract}

\selectlanguage{francais}
\begin{abstract}
Soit $G$ un groupe alg\'ebrique lin\'eaire r\'eductif connexe.
Nous consid\'erons les $G$-vari\'et\'es normales avec orbites horosph\'eriques. Dans cette courte note, nous donnons
 un crit\`ere pour d\'eterminer lorsque ces vari\'et\'es ont au plus des singularit\'es canoniques, log canoniques ou terminales dans le cas o\`u elles admettent une courbe alg\'ebrique comme quotient rationnel. Ce r\'esutat semble nouveau pour le cas sp\'ecial des actions de tores alg\'ebriques avec orbites g\'en\'erales de codimension $1$. Pour la $G$-vari\'et\'e consid\'er\'ee $X$, notre crit\`ere est exprim\'e en terme d'une fonction de poids $\omega_{X}$ qui est construite \`a partir de l'ensemble des valuations $G$-invariantes du corps des fonctions $k(X)$. Dans le cas log terminal, la fonction g\'en\'eratrice de $\omega_{X}$ correspond au volume motivique des cordes de $X$. Comme application, nous traitons le cas des $k^{\star}$-surfaces normales.
\end{abstract}

\section*{\centerline{Introduction}}
Dans cet article, les vari\'et\'es alg\'ebriques et les groupes alg\'ebriques sont d\'efinis sur un corps alg\'ebriquement clos 
$k$ de caract\'eristique $0$. Un espace homog\`ene $G/H$ sous un groupe alg\'ebrique lin\'eaire r\'eductif connexe $G$ est dit \emph{horosph\'erique} si le sous-groupe ferm\'e $H\subseteq G$ contient un sous-groupe unipotent maximal. Dans ce cas, 
le normalisateur $P = N_{G}(H)$ de $H$ dans $G$ est un sous-groupe parabolique, $P/H$ est un tore alg\'ebrique et l'espace homog\`ene $G/H$ est donc r\'ealis\'e comme $P/H$-torseur au dessus de la vari\'et\'e des drapeaux $G/P$ (voir \cite[Section 2]{Pas08}). Rappelons qu'une $G$-vari\'et\'e normale est dite \emph{sph\'erique} si elle poss\`ede une orbite ouverte sous l'action d'un sous-groupe de Borel. Comme cons\'equence de la d\'ecomposition de Bruhat, l'espace homog\`ene horosph\'erique $G/H$ est sph\'erique. 

Nous consid\'erons les $G$-vari\'et\'es normales avec orbites horosph\'eriques. Notre but est d'\'etablir un crit\`ere pour d\'eterminer lorsque que ces vari\'et\'es suppos\'ees $\QQ$-Gorenstein ont au plus des singularit\'es canoniques (log canoniques ou terminales) sous la condition d'existence d'un quotient rationnel qui est une courbe alg\'ebrique (cas de complexit\'e un).
Notre r\'esultat est motiv\'e par le cas des vari\'et\'es toriques (voir \cite{KKMS73, Dan78, Oda78}). Dans \cite{Rei80, Rei83}, les singularit\'es toriques intervenant dans le programme du mod\`ele minimal sont caract\'eris\'ees en termes de g\'eom\'etrie convexe, voir aussi \cite[Section 1]{BG95}, \cite[Sections 3, 4]{Dai02} pour une exposition.

Ces crit\`eres ont \'et\'e g\'en\'eralis\'es par Brion dans le cadre sph\'erique \cite{Bri93} en utilisant la th\'eorie de Luna-Vust (cf. \cite{Kno91}). Recemment, Liendo et Suess \cite{LS13} ont \'etudi\'es les singularit\'es des vari\'et\'es normales avec actions de tores alg\'ebriques via la description d'Altmann-Hausen (voir \cite{AH06, AHS08}). En particulier, ils obtiennent un crit\`ere pour les singularit\'es log terminales des vari\'et\'es normales $\QQ$-Gorenstein dot\'ees d'une op\'eration d'un tore alg\'ebrique avec orbites g\'en\'erales de codimension $1$, voir \cite[Corollary 5.4]{LS13} et \cite[Theorem 4.9]{LS13}, \cite[Theorem 2.22]{LT16} pour des g\'en\'eralisations de ce crit\`ere.  
\\

{ \bf Notations}. Nous r\'eunissons maintenant les notations n\'ecessaires pour \'enoncer notre r\'esultat. Nous utiliserons l'approche de Timashev pour d\'ecrire les op\'erations de groupes alg\'ebriques r\'eductifs de complexit\'e un, voir \cite{Tim97}.
Nous fixons d\'esormais, une $G$-vari\'et\'e normale $X$ avec orbites horosph\'eriques ayant un quotient rationnel  $\pi: X\dashrightarrow C$ sous l'action de $G$, o\`u $C$ est une courbe alg\'ebrique compl\`ete lisse. En particulier, $C$ s'identifie avec l'ensemble des places du corps de fonctions $k(X)^{G}$. Pour un choix fix\'e d'un sous-groupe de Borel $B\subseteq G$, nous appellerons \emph{couleur} de $X$ un diviseur premier $B$-stable de $X$ qui n'est pas $G$-stable. Les couleurs de $X$ constituent un ensemble fini $\F$. D\'esignons par $\F_{X}\subseteq \F$ le sous-ensemble des couleurs contenant une $G$-orbite. Puisque notre probl\`eme est locale, on peut supposer qu'il existe un ouvert affine dense $B$-stable $X_{0}\subseteq X$ intersectant toute $G$-orbite de $X$ \cite[Theorem 1.3]{Kno91}. Sous cette condition, $X_{0}$ est le compl\'ementaire de la r\'eunion des couleurs appartenant \`a $\F\setminus \F_{X}$ \cite[Lemma 2.1]{LT16}. Par ailleurs, $X$ admet un mod\`ele birationnel \'equivariant de la forme $C\times G/H$, o\`u $G/H$ est un espace homog\`ene horosph\'erique (voir par exemple \cite[Satz 2.2]{Kno90}). Si $G/H\rightarrow G/P$ est la projection naturelle vers la vari\'et\'e des drapeaux, alors $\F$ est naturellement en bijection avec l'ensemble des diviseurs de Schubert de $G/P$. En particulier, $\F$ s'identifie \`a l'ensemble des racines simples $\Phi$ de $(G, B)$
qui ne proviennent pas de $P$.  Pour $\alpha\in \Phi$ le symbole $\alpha^{\vee}$ repr\'esentera la coracine associ\'ee et $a_{\alpha}$ le nombre entier $\langle \sum_{\beta\in\Phi}\beta, \alpha^{\vee}\rangle$. Nous d\'esignerons par $\Phi_{X}$ le sous-ensemble de $\Phi$ correspondant aux couleurs de $\F_{X}$.

En consid\'erant le radical unipotent $U$ de $B$, l'alg\`ebre des invariants $A:= k[X_{0}]^{U}$ est gradu\'ee par les poids provenant des fonctions rationnelles propres de $X$ sous l'op\'eration de $B$. Soient $M$ le r\'eseau des $B$-poids de $A$ (ou indiff\'eremment de $k(X)$), $N = \Hom(M, \ZZ)$ le dual et $N_{\QQ} = \QQ\otimes_{\ZZ}N$, $M_{\QQ} = \QQ\otimes_{\ZZ}M$ les $\QQ$-espaces vectoriels associ\'es. Le c\^one des poids $\sigma^{\vee}\subseteq M_{\QQ}$ de $A$ est le dual d'un c\^one poly\'edral saillant $\sigma\subseteq N_{\QQ}$ (i.e. ayant $0$ comme sommet). De plus, en choisissant un ensemble de fonctions rationnelles propres $\V =\{\chi^{m}\in k(X)^{\star}, m\in M\}$ sous $B$ avec les conditions $\chi^{m}\cdot \chi^{m'} = \chi^{m+m'}$ pour tous $m,m'\in M$, l'alg\`ebre $A$ est d\'ecrite par une unique fonction lin\'eaire par mor\c ceaux  
$$\D_{X, \V}:\sigma^{\vee}\rightarrow {\rm CaDiv}(C_{X}),\,m\mapsto \sum_{y\in C_{X}}\min_{v\in\D_{y}}\langle m , v\rangle \cdot [y]$$ $$\text{ d\'efinie par l'\'egalit\'e }A= A[C_{X},\D_{X,\V}] := \bigoplus_{m\in\sigma^{\vee}\cap M}H^{0}(C_{X}, \mathcal{O}_{C_{X}}(\D_{X,\V}(m))\cdot \chi^{m}.$$  
Ici $C_{X}\subseteq C$ est un ouvert dense, les sous-ensembles $\D_{y}\subseteq N_{\QQ}$ sont des poly\`edres avec c\^one de r\'ecession $\sigma$ \'etant pour presque tout $y\in C_{X}$ \'egaux \`a $\sigma$ et ${\rm CaDiv}(C_{X})$ est l'espace vectoriel des $\QQ$-diviseurs de Cartier sur $C_{X}$. Le couple  
$(\D_{X, \V}, \F_{X})$ est la contrepartie combinatoire de $X$ et sera appel\'e un \emph{diviseur poly\'edral colori\'e} associ\'e \`a $X$ (voir \cite[Section 2]{AH06} et \cite[Section 1.3]{LT16}). Nous renvoyons \`a \cite{Tim97} pour
la construction inverse d'une telle $G$-vari\'et\'e \`a partir d'un diviseur poly\'edral colori\'e abstrait. Le formalisme des diviseurs poly\'edraux a \'et\'e introduit dans \cite{AH06}, voir \cite{AHS08, Lan16}  pour des g\'en\'eralisations. Dans la suite, nous poserons $\supp(\D_{X,\V}) = \{y\in C_{X}\,|\, \D_{y}\neq \sigma\}$ et noterons par  $\deg(\D_{X,\V})$ le sous-ensemble de $N_{\QQ}$ qui est vide si $C_{X}\neq C$ et qui est \'egal au poly\`edre $\sum_{y\in C}\D_{y}$ sinon. Notons que l'on a toujours $\deg\,\D\subseteq \sigma$ (voir \cite[Example 2.12]{AH06}).
\\

{ \bf Int\'egration motivique et fonctions de poids}. Nous introduisons la fonction de poids $\omega_{X}$ attach\'ee \`a la donn\'ee $(\D_{X, \V}, \F_{X})$. Dans le cas o\`u les singularit\'es de $X$ sont au plus log terminales, cette fonction appara\^it dans le calcul du \emph{volume motivique des cordes} defini par l'int\'egrale motivique
$$ \E(X) = \int_{\L(X')}\LL^{-\ord K_{X'/X}}d\mu_{X'}.$$
Le symbole $K_{X'/X}$ d\'esigne un diviseur canonique relatif provenant d'une log r\'esolution et $\L(X')$ est le sch\'ema des arcs de la d\'esingularisation. L'int\'egrale $\E(X)$ appartient \`a une certaine modification et compl\'etion $\M$ du localis\'e de l'anneau de Grothendieck $K_{0}({\rm Var}_{k})[\LL^{-1}]$ de la cat\'egorie des $k$-vari\'et\'es par rapport \`a la classe de la droite affine $\LL := [\mathbb{A}^{1}_{k}]$ et ne depend ni du choix de la log r\'esolution ni de celui du diviseur canonique relatif $K_{X'/X}$. Nous renvoyons \`a \cite{DL99, Loe09, Vey06} pour plus de d\'etails sur l'int\'egration motivique et \`a \cite{Bat98} pour la construction des invariants des cordes. Plus precis\'ement, le r\'esultat principal de \cite{LPR16} est la formule suivante inspir\'ee par celle de Batyrev et Moreau dans \cite[Theorem 4.3]{BM13}:
$$\E(X) = [G/H]\left(\sum_{[y,\nu, \ell]\in |\D_{X, \V}|}  \zeta_{\ell}\cdot\LL^{\omega_{X}(y,\nu,\ell)} \right),$$
o\`u ici $\zeta_{\ell}$ est \'egale \`a la classe $[C_{X}\setminus \supp(\D_{X,\V})]$ si $\ell = 0$
et \`a $\LL -1$ si $\ell\geq 1$ vue dans le compl\'et\'e $\M$; le second membre \'etant interpr\'et\'e comme la \emph{fonction g\'en\'eratrice} de $\omega_{X}$.

Supposons que la $G$-vari\'et\'e $X$ avec orbites horosph\'eriques associ\'ee \`a la donn\'ee $(\D_{X, \V}, \F_{X})$ est $\QQ$-Gorenstein (et possiblement non log terminale). Nous renvoyons \`a \cite[Corollary 2.19]{LT16} pour un crit\`ere explicite de la condition $\QQ$-Gorenstein. On d\'efinit alors l'ensemble $|\D_{X,\V}|$ par l'\'egalit\'e
 $$|\D_{X,\V}| = \{[y,\nu, \ell]\,|\, y\in \supp(\D_{X,\V}), \nu\in N, \ell\in \ZZ_{\geq 0}\text{ et }(\nu, \ell )\in C(\D_{y})\},$$
o\`u $C(\D_{y})\subseteq N_{\QQ}\oplus \QQ$ est le c\^one engendr\'e par la r\'eunion de $(\D_{y}, 1)$ et de $(\sigma, 0)$. Le symbole $[y,\nu, \ell]$ d\'esigne la classe modulo la relation d'\'equivalence $\sim$ d\'efinie par $(y,\nu, \ell)\sim (y'. \nu', \ell')$  si et seulement si ($y= y'$, $\nu = \nu'$, $\ell = \ell'$) ou ($\ell = \ell' = 0$, $\nu = \nu'$). Soit $C(\D_{X,\V})$ l'ensemble $\bigcup_{y\in C_{X}}\{y\}\times C(\D_{y})$ modulo $\sim$. Sous l'hypoth\`ese $\QQ$-Gorenstein la fonction $\omega_{X}: C(\D_{X,\V})\rightarrow \QQ$ est uniquement d\'etermin\'ee par les conditions suivantes pour tout $y\in C_{X}$ (voir \cite[Section 5.2 et Proposition 5.11]{LPR16}). 
\begin{itemize}
\item[(i)]  $\omega_{X}$ induit une fonction $\QQ$-lin\'eaire sur chaque c\^one $C(\D_{y})$;
\item[(ii)] Nous avons
$\omega_{X}(y,\rho) = -1$ pour tout vecteur primitif g\'en\'erateur $\rho\in N\oplus \ZZ$ d'une face de dimension $1$ de $C(\D_{y})$ tel que
l'intersection de $\QQ_{\geq 0}\rho$ avec $ \deg(\D_{X,\V})\cup\varrho(\D_{X,\V})$ est vide;
\item[(iii)] On a $\omega_{X}(y,\alpha^{\vee}_{|M}, 0) = -a_{\alpha}$ pour tout $\alpha\in \Phi_{X}$.
\item[(iv)] Si $\rho\in N$ est un g\'en\'erateur primitif d'une face de dimension 1 de $\sigma$ tel que $\QQ_{\geq 0}\rho\cap \deg(\D_{X,\V})\neq \emptyset$, alors il existe $\lambda\in\QQ_{>0}$ tel que $\rho =\lambda v$ o\`u $v = \sum_{z\in C_{X}}v_{z}$ et $v_{z}$ est un sommet de $\D_{z}$ \cite[Lemma 5.22]{LPR16}. Dans ce cas, on a $C_{X} = C$ et 
$$\omega_{X}(y,\rho , 0) = \lambda\left(\deg\, K_{C} + \sum_{z\in C}\left(1 - \frac{1}{\kappa(v_{z})}\right)\right)\text{ avec } \kappa(v_{z}) = \inf\{\ell\in\ZZ_{>0}\,\,|\,\, \ell v_{z}\in N\}.$$
\end{itemize}
Les \'el\'ements de $|\D_{X,\V}|$ correspondent \`a certaines valuations (g\'eom\'etriques) $G$-invariantes  de $k(X)$ \`a valeurs dans $\ZZ$ dont le centre est une sous-vari\'et\'e ferm\'ee irr\'eductible $G$-stable de $X$. Pour chaque $\xi\in |\D_{X,\V}|$ correspondant \`a un diviseur exceptionnel $G$-stable d'une d\'esingularisation \'equivariante de $X$, la valeur $-1-\omega_{X}(\xi)$ est \'egale \`a sa  discr\'epance (voir \cite[Propositions 5.9 et 5.15]{LPR16}).  

\section*{R\'esultat principal}
\begin{theorem}\label{theo} Soit $X$ une $G$-vari\'et\'e normale $\QQ$-Gorenstein avec orbites horosph\'eriques. Supposons que $X$ admette un quotient rationnel $X\dashrightarrow C$, o\`u $C$ est une courbe alg\'ebrique compl\`ete lisse, et que $X$ soit d\'ecrite par le diviseur poly\'edral colori\'e $(\D_{X, \V}, \F_{X})$. Alors $X$ a au plus des singularit\'es log canoniques si et seulement si au moins une des assertions  $(a),(b),(c)$  est vraie:
\begin{itemize}
\item[$(a)$] $C_{X}$ est une courbe alg\'ebrique affine.
\item[$(b)$] $C_{X}$ s'identifie \`a la droite projective $\PP_{k}^{1}$ et $\sum_{y\in C_{X}}\left(1-\frac{1}{\kappa_{y}}\right)\leq 2,$ o\`u $\kappa_{y} = \max\{\kappa(v)\,| v \text{ sommet de }\D_{y}\}$ et $\kappa(v) = \inf\{\ell\in\ZZ_{>0}\,\,|\,\, \ell v\in N\}.$ 
\item[$(c)$] $C_{X}$ est une courbe elliptique et pour tout $y\in C_{X}$ les sommets du 
poly\`edre $\D_{y}$ sont des vecteurs entiers.
\end{itemize}
Par ailleurs, $X$ a au plus des singularit\'es canoniques (resp. terminales) si et seulement si:
\begin{itemize}
\item[$(d)$] Pour tout $y\in C_{X}$ et tout vecteur entier primitif $\xi$ tel que $\xi$ soit dans l'interieur relatif de $C_{y}(\D)$ ou un g\'en\'erateur d'une face de dimension $1$ de $C_{y}(\D)$ tel que $\QQ_{\geq 0}\xi\cap (\deg(\D_{X,\V})\cup\varrho(\D_{X,\V}))\neq \emptyset$, on a l'in\'egalit\'e $\omega_{X}(y,\xi)\leq -1$ $($resp. $\omega_{X}(y,\xi)< -1$ $)$.
\end{itemize}
\end{theorem}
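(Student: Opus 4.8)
The plan is to translate each of the three singularity notions into a discrepancy inequality, reduce those inequalities to $G$-invariant data, and then read them off from the weight function $\omega_X$ via the rules (i)--(iv). Since $\mathrm{char}\,k=0$ and $G$ is connected, $X$ admits a $G$-equivariant log resolution $f\colon Y\to X$; as $G$ is connected it fixes each of the finitely many exceptional prime divisors, so every exceptional divisor of $f$ is $G$-stable. Because a log resolution detects whether $X$ is log canonical, canonical or terminal, and because for the reduced pair $(X,0)$ the strict transforms of prime divisors of $X$ have discrepancy $0\ge -1$, all three notions are tested on \emph{exceptional} divisors only; and it suffices to test them on $G$-invariant ones. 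After suitable equivariant subdivisions these range over the lattice points of $|\D_{X,\V}|$ together with the colours of $\F_X$, and by the discrepancy interpretation recalled before the theorem each exceptional $\xi$ has discrepancy $-1-\omega_X(\xi)$. Hence $X$ is log canonical iff $\omega_X(\xi)\le 0$ for all exceptional $\xi$, canonical iff $\omega_X(\xi)\le -1$, and terminal iff $\omega_X(\xi)<-1$.

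The two parts of the theorem then differ for a structural reason that I would make explicit. For the canonical and terminal thresholds, $\omega_X\le -1$ (resp. $<-1$) is not preserved under taking positive combinations of the generators of a cone $C(\D_y)$: a primitive lattice vector deep in the relative interior may satisfy $\omega_X>-1$ even when all ray generators satisfy $\omega_X=-1$, exactly as a non-canonical toric cone behaves. This forces a check at every individual lattice point. The non-exceptional invariant divisors are the primitive generators of the one-dimensional faces of $C(\D_y)$ that avoid $\deg(\D_{X,\V})\cup\varrho(\D_{X,\V})$ (on which $\omega_X=-1$ by (ii)); the remaining lattice points, namely those in the relative interior of $C(\D_y)$ and the boundary rays meeting $\deg(\D_{X,\V})\cup\varrho(\D_{X,\V})$, are precisely the exceptional ones. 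Rewriting $-1-\omega_X(\xi)\ge 0$ (resp. $>0$) as $\omega_X(y,\xi)\le -1$ (resp. $<-1$) over exactly this set yields (d), and the $\QQ$-linearity (i) of $\omega_X$ reduces it to a finite computation on each cone.

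For log canonicity the threshold $0$ is forgiving: since a positive combination of non-positive values is non-positive, $\omega_X\le 0$ on the cone $C(\D_y)$ follows from $\omega_X\le 0$ on the data (ii)--(iv) that determine it, so the condition collapses to the ray generators. The generators of type (ii) give $\omega_X=-1$ and the colours give $-a_\alpha\le 0$, the $a_\alpha$ being the positive multiplicities of the Schubert divisors in the anticanonical class of the Fano variety $G/P$; the only generators on which $\omega_X$ can be positive are therefore the $\sigma$-rays of type (iv), which occur exactly when $\deg(\D_{X,\V})\neq\emptyset$, i.e. when $C_X=C$ is complete. If $C_X$ is affine no such ray exists and $X$ is log canonical, giving $(a)$. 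If $C_X=C$ is complete, formula (iv) gives $\omega_X(y,\rho,0)=\lambda\bigl(\deg K_C+\sum_{z}(1-1/\kappa(v_z))\bigr)$ with $\lambda>0$, so log canonicity amounts to the orbifold inequality $\deg K_C+\sum_{y}(1-1/\kappa_y)\le 0$. A genus split finishes it: $\deg K_C=-2$ on $\PP^{1}_{k}$ gives $\sum_y(1-1/\kappa_y)\le 2$, i.e. $(b)$; $\deg K_C=0$ on an elliptic curve forces every $\kappa_y=1$, i.e. integral vertices, giving $(c)$; and $\deg K_C\ge 2$ for genus $\ge 2$ makes the inequality fail, so no higher-genus complete base occurs.

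I expect the main obstacle to be the last reduction, namely showing that over a complete curve the binding constraint among all type-(iv) rays is governed by the single orbifold inequality in which $\kappa_y=\max_v\kappa(v)$. One must match the specific vertex decomposition $\rho=\lambda\sum_{z}v_z$ of (iv) to the worst-case choice of vertices and check that the extremal ray realizing $\kappa(v_z)=\kappa_z$ simultaneously at each $z$ is actually attained among the one-dimensional faces of $\sigma$ meeting $\deg(\D_{X,\V})$ (the points $z\notin\supp(\D_{X,\V})$ contributing a zero term since there $v_z=0$ and $\kappa=1$). The monotonicity of $t\mapsto 1-1/t$ disposes of the optimization immediately, but the existence and relevance of this extremal ray -- and, in the canonical and terminal cases, the parallel point that the deepest $G$-stable divisor over each $y$ carries the finest denominator $\kappa_y$ -- is the delicate geometric input that the argument has to supply.
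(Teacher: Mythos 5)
Your overall architecture coincides with the paper's: discrepancy equals $-1-\omega_{X}(\xi)$ at the $G$-stable exceptional divisors of an equivariant resolution, the log canonical threshold collapses to the ray generators by $\QQ$-linearity of $\omega_{X}$ on each cone $C(\D_{y})$, the type-(iv) rays produce the orbifold inequality $\deg K_{C}+\sum_{y}(1-1/\kappa(v_{y}))\leq 0$ and the genus split gives $(a)$, $(b)$, $(c)$, while the canonical/terminal conditions must be tested at every primitive lattice point, giving $(d)$. The paper makes the resolution explicit as the composition $X(\N')\to X(\tilde\N)\to X(\N)\to X$ (decolorization, affinization, toroidal subdivision), which is what legitimizes the claim that every primitive $\xi\in C_{y}(\D)$ is realized by some exceptional divisor; your appeal to ``suitable equivariant subdivisions'' is the same device.

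There is, however, a genuine gap in the converse direction of the log canonical criterion, precisely at the point you flag in your last paragraph without resolving. The discrepancy conditions at exceptional divisors only give you $\delta(\vv^{i})\leq 0$ for the special vertex sequences $\vv^{i}$ whose sum $\sum_{y}v^{i}_{y}$ lies on an extremal ray of $\sigma$ meeting $\deg(\D_{X,\V})$, whereas $(b)$ and $(c)$ assert the inequality with $\kappa_{y}=\max_{v}\kappa(v)$, i.e.\ $\delta(\vv)\leq 0$ for \emph{every} vertex sequence $\vv$. A generic sequence sums to a point of $\deg(\D_{X,\V})$ that is not on any extremal ray, and the ``worst-case'' ray you hope to find among the one-dimensional faces of $\sigma$ need not exist. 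The paper closes this gap differently: from linearity and the values on the extremal rays it first deduces $(\omega_{X})_{|\sigma}\leq 0$ on the whole cone, and then proves the \emph{identity} $\omega_{X}(z,\sum_{y}v_{y})=\delta(\vv)$ for an arbitrary vertex sequence, by writing $(K_{X})_{|X_{0}}=\div(f\chi^{e})$ with $f\in k(C)^{\star}$, $e\in M$ (the $\QQ$-Gorenstein hypothesis and $C_{X}=C$ complete) and adapting \cite[Lemma 5.23]{LPR16}, so that $\omega_{X}(z,v)=\sum_{y\in C}(\langle e,v_{y}\rangle+\ord_{y}(f))=\delta(\vv)$. In other words, the missing ingredient is not an extremal-ray optimization but an extension of formula (iv) from the rays of $\sigma$ to all points of the form $\sum_{y}v_{y}$, obtained from the explicit canonical divisor; without it your argument only establishes $(b)$ and $(c)$ with $\kappa_{y}$ replaced by the denominators occurring along extremal rays, which is strictly weaker.
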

\begin{proof}
Nous commen\c cons par rappeler la construction de \cite[Section 5.3]{LPR16} pour une d\'esingularisation  de $X$. Elle est donn\'ee par une suite de morphismes propres birationnels $G$-equivariants
\begin{equation*}\label{e:desingularization}
	\begin{tikzpicture}[description/.style={fill=white,inner sep=2pt},baseline=(current  bounding  box.center)]
			\matrix (m) [matrix of math nodes, row sep=3em,column sep=2.5em, text height=1.5ex, text depth=0.25ex]
		{ 	X'= X(\N') & X(\tilde \N) & X(\N) & X. \\
		};
		\path[-]
			(m-1-1) edge[->] node[auto] {$q'$} (m-1-2)
			(m-1-2) edge[->] node[auto] {$q$} (m-1-3)
			(m-1-3) edge[->] node[auto] {$\pi$} (m-1-4)
			(m-1-1) edge[->,bend right=10,looseness=1.4] node[below] {$\psi$} (m-1-4);
	\end{tikzpicture}
\end{equation*}
Les $G$-vari\'et\'es  $X(\N'), X(\tilde{\N}), X(\N)$ sont d\'ecrites par des ensembles finis de diviseurs poly\'edraux colori\'es, appel\'es \emph{\'eventails divisoriels colori\'es} (voir \cite{Tim97},\cite[Section 3]{LPR16} pour plus de d\'etails); chacun d'autres eux d\'ecrivant un recollement en des ouverts $G$-stables de $G$-vari\'et\'es normales avec orbites horosph\'eriques. On a $\N = \{(\D_{X, \V}, \emptyset)\}$ et $\pi$
est le morphisme naturel de d\'ecolorisation (voir \cite[Section 2.2]{LT16}). De plus, en consid\'erant un recouvrement fini $(C_{X}^{i})_{i\in I}$ d'ouverts affines de $C_{X}$, l'ensemble $\tilde{\N}$ est $\{({\D_{X, \V}}_{|C_{X}^{i}}, \emptyset)\,|\, i\in I\}$ et $q$ est le morphisme de \emph{contraction} ou \emph{d'affinisation} (voir par exemple \cite[Theorem 3.1]{AH06}, \cite[Section 4.2]{LPR16}). Enfin la vari\'et\'e lisse 
$X(\N')$ est induite par une subdivision r\'eguli\`ere de $C(\D_{X, \V})$ (voir \cite[Section 5.3]{LPR16} pour une construction inspir\'ee de \cite{KKMS73}).

En utilisant \cite[Proposition 5.9]{LPR16}, un diviseur canonique relatif \`a $\psi$ et port\'e sur le lieu exceptionnel peut \^etre exprim\'e comme la somme
$$K_{X'/X} = K_{X'} -\psi^{\star}K_{X} = \sum_{i=1}^{r} (-1-\omega_{X}(y_{i}, p_{i})) D_{(y_{i}, p_{i})}+ \sum_{j=1}^{s} (-1-\omega_{X}(y_{j}', p_{j}')) D_{(y_{j}', p_{j}')} +\sum_{\ell=1}^{t} (-1-\omega_{X}(y_{\ell}'', p_{\ell}'')) D_{(y_{\ell}'', p_{\ell}'')},$$
o\`u  $D_{(y_{i}, p_{i})}, D_{(y_{j}', p_{j}')}, D_{(y_{\ell}'', p_{\ell}'')}$ sont les diviseurs premiers correspondants. Ceux de la forme $D_{(y_{\ell}'', p_{\ell}'')}$ sont obtenus comme composantes irr\'eductibles du lieu exceptionnel de $q'$ et ceux de la forme $D_{(y_{i}, p_{i})}, D_{(y_{j}', p_{j}')}$ sont obtenus comme images inverses de composantes irr\'eductibles des lieux exceptionnels de $q$ et $\pi$, respectivement. Supposons que $X$ satisfasse au moins une des assertions $(a), (b), (c)$. Nous rappelons que la condition log canonique est \'equivalente  \`a ce que tous les coefficients de $K_{X'/X}$ sont $\geq -1$. Si $X$ v\'erifie $(a)$, alors $\omega_{X}\leq 0$ et donc $X$ a des singularit\'es log canoniques. Donc on peut supposer que $C_{X} = C$ est compl\`ete. Dans ce cas, pour toute suite $\vv= (v_{y})_{y\in C}$ o\`u $v_{y}$ est un sommet de $\D_{y}$ on pose  
$$\delta(\vv):=  \deg\, K_{C} + \sum_{z\in C}\left(1 - \frac{1}{\kappa(v_{z})}\right).$$
Pour tout $1\leq i\leq r$, l'in\'egalit\'e $-1-\omega_{X}(y_{i}, p_{i})\geq -1$ est \'equivalente \`a $\delta(\vv^{i})\leq 0,$ o\`u $\vv^{i} = (v_{y}^{i})_{y\in C}$ est une suite telle que $v_{y}^{i}$ est un sommet de $\D_{y}$ pour tout $y\in C$ et telle qu'il existe $\lambda\in \QQ_{>0}$ tel que $\lambda\cdot p_{i} =\sum_{y\in C} v_{y}^{i}$. Cette derni\`ere condition est consequence de la validit\'e de l'assertion $(b)$ ou $(c)$.

R\'eciproquement, supposons que $X$ a des singularit\'es log canoniques et que $C_{X} = C$ est une courbe compl\`ete lisse. Soit $\vv = (v_{y})_{y\in C}$ une suite arbitraire telle que pour tout $y\in C$, le vecteur $v_{y}$ est un sommet de $\D_{y}$. Notons $v = \sum_{y\in C}v_{y}$. Alors
en reprenant les notations ci-dessus les in\'egalit\'es $\delta(\vv^{i})\leq 0$ pour $1\leq i\leq r$ impliquent $\deg\, K_{C}\leq 0$, c'est \`a dire $\deg\, K_{C} = -2$ ou $\deg\, K_{C} = 0$. Elles impliquent en outre que $(\omega_{X})_{|\sigma}\leq 0$.
Puisque $C_{X}$ est compl\`ete, on a $(K_{X})_{|X_{0}} = \div(f\chi^{e})$ (cf. \cite[Section 2.3]{LT16}) pour le diviseur canonique $K_{X}$ d\'efini dans \cite[Theorem 2.18]{LT16}, o\`u  $f\chi^{e}$ est une fonction rationnelle $B$-propre de $k(X)$ avec $f\in k(C)^{\star}$ et $e\in M$. En adaptant l'argument de la preuve de \cite[Lemma 5.23]{LPR16}, on d\'eduit la formule $$0 \geq \omega_{X}(z, v) = \sum_{y\in C}\frac{1}{\kappa(v_{y})}\kappa(v_{y})(\langle e , v_{y}\rangle + \ord_{y}(f)) = \delta(\vv)$$ o\`u $z\in C$. 
Consid\'erons le cas $\deg\, K_{C} = 0$. Alors $C_{X}$ est une courbe elliptique et l'in\'egalit\'e $\delta(\vv)\leq 0$ donne $v_{y}\in N$ pour tout $y\in C$. Comme la suite $(v_{y})_{y\in C}$ est choisie de fa\c con arbitraire, on obtient l'assertion $(c)$. Si maintenant $\deg\, K_{C} = -2$, alors $C_{X}$ s'identifie \`a la droite projective et \`a nouveau en maximisant $\delta(\vv)\leq 0$ sur toute suite $\vv$, on a $(b)$.

Nous passons au crit\`ere pour les singularit\'es canoniques (resp. terminales). Supposons que $X$ a des singularit\'es canoniques (resp. terminales). Soit $\xi\in C_{y}(\D)$ un vecteur primitif entier o\`u $y\in C_{X}$. Alors on peut construire l'application $q'$ de sorte qu'il existe $1\leq \ell\leq t$ tel que $\xi = (y_{\ell}'', p_{\ell}'')$. En particulier, le fait que la discr\'epance au diviseur $D_{(y_{\ell}'', p_{\ell}'')}$ est $\geq 0$ (resp. $>0$) implique la condition $(d)$. La r\'eciproque est ais\'ee \`a d\'emontrer et laiss\'ee au lecteur.
\end{proof}
Dans l'exemple suivant, nous regardons le cas particulier des surfaces affines normales 
avec une op\'eration alg\'ebrique fid\`ele du groupe multiplicatif (voir \cite[Theorem 5.7]{LS13}
pour le cas des singularit\'es canoniques\footnote{Le lecteur notera que la s\'erie $E$ dans \emph{loc. cit.} devrait \^etre 
$ E_{i}: \frac{1}{2}\cdot [0] + \frac{1}{3}\cdot [1]  - \frac{i - 4}{i-3}\cdot [\infty]$
pour $i = 6,7,8$.}). Plus g\'en\'eralement, nous renvoyons \`a \cite{Sem80}, \cite[Section 7]{Ish14} pour plus de d\'etails sur les singularit\'es de surfaces. Notons que toute surface normale  ayant des singularit\'es rationnelles est $\mathbb{Q}$-Gorenstein  et celles ayant des singularit\'es terminales sont lisses (voir \cite[Theorems 7.1.18, 7.3.2]{Ish14}). 
\begin{exemple} { \em $k^{\star}$-surfaces.}
Supposons que $X$ est une surface affine normale et que le groupe op\'erant $G$ est  le groupe $\G_{m}(k)= (k^{\star},\times)$. Alors on a $X= X_{0}$,  $M = \ZZ$ et $\F_{X} =\emptyset$. 
Si $\sigma \neq 0$, alors on peut supposer que $\sigma = \QQ_{\geq 0}$ et donc $\D$ est uniquement d\'etermin\'e par le diviseur $D :=\D(1)$. Dans ce cas, on dit que la $k^{\star}$-surface $X$ est de type \emph{parabolique} si $C_{X}$ est affine et de type \emph{elliptique} si $C_{X}$ est compl\`ete. Le cas restant est lorsque $\sigma = \{0\}$, $C_{X}$ est affine, et
$\D$ est alors uniquement d\'etermin\'e par le couple $(D_{-}, D_{+})$, o\`u $D_{-} =  \D(-1)$ et $D_{+} = \D(1)$; la surface correspondante est dite de type \emph{hyperbolique}. Voir \cite{FZ03} pour une description g\'eom\'etrique de cette
trichotomie. Si $X$ est elliptique, alors d'apr\`es le th\'eor\`eme \ref{theo}, $X$ est log canonique si et seulement si l'un des deux cas suivant est vrai:
\begin{itemize}
\item[(1)] ($X$ est rationnelle) La courbe $C_{X}$ s'identifie \`a la droite projective $\PP^{1}_{k}$ et apr\`es r\'eductions par des isomorphismes $k^{\star}$-equivariants (i.e., en changeant $D$ par $D+E$ o\`u $E$ est un diviseur entier de degr\'e $0$ sur $C_{X}$), on peut \'ecrire le diviseur $D$ comme la somme
$$D = \frac{e_{1}}{m_{1}}\cdot [0]+\frac{e_{2}}{m_{2}}\cdot [1]+\frac{e_{3}}{m_{3}}\cdot [2]+\frac{e_{4}}{m_{4}}\cdot [\infty],$$
o\`u pour tout $1 \leq i\leq 4$, les entiers $e_{i},m_{i}$ sont premiers entre eux et le quadruplet $(m_{1}, m_{2}, m_{3}, m_{4})$ est de la forme $(2,2,r,1)$ (pour $r\in \ZZ_{>1}$), $(1, p, q, 1)$ (pour des entiers $p\geq q \geq 1$), $(2,3,3,1)$, $(2,3,4,1)$, $(2,3,5,1)$, $(2,3,6,1)$, $(2,4,4,1)$, $(3,3,3,1)$, et $(2,2,2,2)$.  
\item[(2)] ($X$ n'est pas rationnelle) Alors $C_{X}$ est une courbe elliptique et $D$ est un diviseur entier de degr\'e $>0$. En particulier, tout c\^one affine normale au dessus d'une courbe elliptique a des singularit\'es log canoniques; ces derniers correspondant au cas o\`u $D$ est tr\`es ample, c'est \`a dire satisfaisant la condition $\deg \,D\geq 3$.   
\end{itemize}  
\end{exemple}


\begin{thebibliography}{}
\begin{scriptsize}



\bibitem[AH06]{AH06}
K. Altmann, J. Hausen.
\newblock {\em Polyhedral divisors and algebraic torus actions.}
\newblock Math. Ann. {\bf 334}. (2006). 557-607.

\bibitem[AHS08]{AHS08}
K. Altmann, J. Hausen, H. Suess.
\newblock {\em Gluing affine torus actions via divisorial fans.}
\newblock Transform. Groups. {\bf 13}. (2008). 215-242.

\bibitem[Bat98]{Bat98}
V. Batyrev.
\newblock {\em Stringy Hodge numbers of varieties with Gorenstein canonical singularities.}
\newblock Integrable systems and algebraic geometry (Kobe/Kyoto, 1997), 1--32, World Sci. Publ., River Edge, NJ, 1998.

\bibitem[BM13]{BM13}
V. Batyrev, A. Moreau. 
\newblock {\em The arc space of horospherical varieties and motivic integration.}
\newblock Compos. Math. 149 (2013), no. 8, 1327--1352. 

\bibitem[BG95]{BG95}
C. Bouvier, G. Gonzalez-Sprinberg.
\newblock \emph{Syst\`eme g\'en\'erateur minimal, diviseurs essentiels et G-d\'esingularisations de vari\'et\'es toriques.}
\newblock Tohoku Math. J. (2) 47 (1995), no. 1, 125--149.


\bibitem[Bri93]{Bri93}
M. Brion. 
\newblock {\em Vari\'et\'es sph\'eriques et th\'eorie de Mori.} 
\newblock Duke Math. J. 72 (1993), no. 2, 369--404.

\bibitem[Dai02]{Dai02}
D. Dais.
\newblock \emph{Resolving 3-dimensional toric singularities. Geometry of toric varieties,}
\newblock  155--186, S\'emin. Congr., 6, Soc. Math. France, Paris, 2002. 

\bibitem[Dan78]{Dan78}
V. I. Danilov. 
\newblock \emph{The geometry of toric varieties.}
\newblock Uspekhi Mat. Nauk 33 (1978), no. 2(200), 85--134, 247.


\bibitem[DL99]{DL99}
J. Denef, F. Loeser. 
\newblock \emph{Germs of arcs on singular algebraic varieties and motivic integration.}
\newblock Invent. Math. 135 (1999), no. 1, 201--232.


\bibitem[FZ03]{FZ03}
H. Flenner, M. Zaidenberg.
\newblock \emph{Normal affine surfaces with $\CC^{\star}$-actions.} 
\newblock Osaka J. Math. 40 (2003), no. 4, 981--1009. 

\bibitem[Ish14]{Ish14}
S. Ishii. 
\newblock \emph{Introduction to singularities.}
\newblock Springer, Tokyo, 2014. 

\bibitem[KKMS73]{KKMS73} 
G. Kempf, F. Knudsen, D. Mumford, B. Saint-Donat. 
\newblock \emph{Toroidal embeddings.} I. 
\newblock Lecture Notes in Mathematics, Vol. 339. Springer-Verlag, Berlin-New York, 1973.

\bibitem[Kno90]{Kno90} 
F. Knop. 
\newblock \emph{Weylgruppe und Momentabbildung.} 
\newblock Invent. Math. 99 (1990), no. 1, 1--23.


\bibitem[Kno91]{Kno91} 
F. Knop. 
\newblock \emph{The Luna-Vust theory of spherical embeddings.} 
\newblock Proceedings of the Hyderabad Conference on Algebraic Groups (Hyderabad, 1989), 225--249, Manoj Prakashan, Madras, 1991. 



\bibitem[LT16]{LT16}
K. Langlois, R. Terpereau. 
\newblock \emph{On the geometry of normal horospherical G-varieties of complexity one.} 
\newblock J. Lie Theory 26 (2016), no. 1, 49--78.


\bibitem[LPR16]{LPR16} 
K. Langlois, C. Pech, M. Raibaut. 
\newblock \emph{Stringy invariants for horospherical varieties of complexity one.} 
\newblock  arXiv:1511.03852.

 

\bibitem[Lan16]{Lan16} 
K. Langlois. 
\newblock \emph{On the classification of normal $G$-varieties with spherical orbits.} 
\newblock   arXiv:1610.02837.

\bibitem[LS13]{LS13}
A. Liendo, H. Suess. 
\newblock \emph{Normal singularities with torus actions.}
\newblock Tohoku Math. J. (2) 65 (2013), no. 1, 105--130.

\bibitem[Loe09]{Loe09}
F. Loeser. 
\newblock \emph{Seattle lectures on motivic integration.}
\newblock Algebraic geometry--Seattle 2005. Part 2, 745--784, Proc. Sympos. Pure Math., 80, Part 2, Amer. Math. Soc., Providence, RI, 2009.





\bibitem[Oda78]{Oda78} 
T. Oda. 
\newblock \emph{Torus embeddings and applications.}
\newblock Based on joint work with Katsuya Miyake. Tata Institute of Fundamental Research Lectures on Mathematics and Physics, 57. Tata Institute of Fundamental Research, Bombay; by Springer-Verlag, Berlin-New York, 1978. 


\bibitem[Pas08]{Pas08}
B. Pasquier. 
\newblock \emph{Vari\'et\'es horosph\'eriques de Fano.} 
\newblock Bull. Soc. Math. France 136 (2008), no. 2, 195--225.

\bibitem[Rei80]{Rei80}
M. Reid. 
\newblock\emph{Canonical 3-folds.}
\newblock Journ\'ees de G\'eometrie Alg\'ebrique d'Angers, Juillet 1979/Algebraic Geometry, Angers, 1979, pp. 273--310, Sijthoff and Noordhoff, Alphen aan den Rijn--Germantown, Md., 1980.

\bibitem[Rei83]{Rei83}
M. Reid. 
\newblock\emph{Decomposition of toric morphisms.}
\newblock Arithmetic and geometry, Vol. II, 395–418, Progr. Math., 36, Birkh\"auser Boston, Boston, MA, 1983. 


\bibitem[Sem80]{Sem80}
Edited by M. Demazure, H. Pinkham, B. Teissier.
\newblock\emph{S\'eminaire sur les Singularit\'es des Surfaces.} Centre de Math\'ematiques de l'Ecole Polytechnique, Palaiseau, 1976--1977.  Lecture Notes in Mathematics, 777. Springer, Berlin, 1980.


\bibitem[Tim97]{Tim97}
D. A. Timash\"ev. 
\newblock \emph{Classification of $G$-manifolds of complexity $1$}. 
\newblock (Russian) Izv. Ross. Akad. Nauk Ser. Mat. {\bf 61} 
(1997), no. 2, 127--162; translation in Izv. Math. {\bf 61} (1997), no. 2, 363--397. 




\bibitem[Vey06]{Vey06}
W. Veys.
\newblock\emph{Arc spaces, motivic integration and stringy invariants.}
\newblock Singularity theory and its applications, 529--572, Adv. Stud. Pure Math., 43, Math. Soc. Japan, Tokyo, 2006.
\end{scriptsize}
\end{thebibliography}
\end{document}